\theoremstyle{plain}
\newtheorem{theorem}{Theorem}
\theoremstyle{remark}
\newtheorem{remark}[theorem]{Remark}
\newtheorem{example}[theorem]{Example}
\theoremstyle{plain}
\newtheorem{corollary}[theorem]{Corollary}
\newtheorem{lemma}[theorem]{Lemma}
\newtheorem{proposition}[theorem]{Proposition}
\newtheorem{problem}[theorem]{Problem}
\def\N{{\mathbb N}}
\def\R{{\mathbb R}}
\newcommand{\E}{{\mathbb E}}
\renewcommand{\P}{{\mathbb P}}
\newcommand{\F}{{\mathcal F}}
\newcommand{\e}{\varepsilon}
\renewcommand{\O}{\Omega}
\newcommand{\beq}{\begin{equation}}
\newcommand{\eeq}{\end{equation}}
\newcommand{\bal}{\begin{aligned}}
\newcommand{\eal}{\end{aligned}}
\newcommand{\ben}{\begin{enumerate}}
\newcommand{\een}{\end{enumerate}}
\newcommand{\bit}{\begin{itemize}}
\newcommand{\eit}{\end{itemize}}
\newcommand{\bth}{\begin{theorem}}
\renewcommand{\eth}{\end{theorem}}
\newcommand{\bpr}{\begin{proposition}}
\newcommand{\epr}{\end{proposition}}
\newcommand{\ble}{\begin{lemma}}
\newcommand{\ele}{\end{lemma}}
\newcommand{\bpf}{\begin{proof}}
\newcommand{\epf}{\end{proof}}
\newcommand{\bex}{\begin{example}}
\newcommand{\eex}{\end{example}}
\newcommand{\bre}{\begin{example}}
\newcommand{\ere}{\end{example}}
\newcommand{\calL}{{\mathcal L}}
\newcommand{\one}{{{\bf 1}}}
\newcommand{\limn}{\lim_{n\to\infty}}
\newcommand{\inv}[1]{\frac{1}{#1}}
\begin{document}

\title
[tangent martingale difference sequences]{Some remarks on tangent martingale
difference sequences in $L^1$-spaces}

\address{Delft Institute of Applied Mathematics\\
Delft University of Technology \\ P.O. Box 5031\\ 2600 GA Delft\\The
Netherlands}

\author{Sonja Cox}
\email{S.G.Cox@tudelft.nl, sonja.cox@gmail.com}

\author{Mark Veraar}
\email{M.C.Veraar@tudelft.nl, mark@profsonline.nl}

\thanks{The second named author is supported by the Netherlands Organisation for
Scientific Research (NWO) 639.032.201 and the Research Training
Network MRTN-CT-2004-511953}

\keywords{tangent sequences, UMD Banach spaces, martingale
difference sequences, decoupling inequalities, Davis decomposition}

\subjclass[2000]{60B05 Secondary: 46B09, 60G42}

\date\today

\begin{abstract}
Let $X$ be a Banach space. Suppose that for all $p\in (1, \infty)$ a
constant $C_{p,X}$ depending only on $X$ and $p$ exists such that
for any two $X$-valued martingales $f$ and $g$ with tangent
martingale difference sequences one has
\[\E\|f\|^p \leq C_{p,X} \E\|g\|^p \ \ \ \ \ \  (*).\]
This property is equivalent to the UMD condition. In fact, it is
still equivalent to the UMD condition if in addition one demands
that either $f$ or $g$ satisfy the so-called (CI) condition.
However, for some applications it suffices to assume that $(*)$
holds whenever $g$ satisfies the (CI) condition. We show that the
class of Banach spaces for which $(*)$ holds whenever only $g$
satisfies the (CI) condition is more general than the class of UMD
spaces, in particular it includes the space $L^1$. We state several
problems related to $(*)$ and other decoupling inequalities.
\end{abstract}


\maketitle

\section{Introduction}
Let $(\O, {\mathcal A}, \P)$ be a complete probability space. Let
$X$ be a Banach space and let $(\mathcal{F}_n)_{n\geq 0}$ be a
filtration. The $(\mathcal{F}_n)_{n\geq 1}$-adapted sequences of
$X$-valued random variables $(d_n)_{n\geq 1}$ and $(e_n)_{n\geq 1}$
are called \emph{tangent} if for every $n=1,2,\ldots$ and every
$A\in \mathcal{B}(X)$
\[\begin{aligned}
\mathbb{E}(1_{\{d_n\in
A\}}|\mathcal{F}_{n-1})=\mathbb{E}(1_{\{e_n\in
A\}}|\mathcal{F}_{n-1}).
\end{aligned}\]

An $(\mathcal{F}_n)_{n\geq 1}$-adapted sequence of $X$-valued random
variables $(e_n)_{n\geq 1}$ is said to satisfy the \emph{(CI)
condition} if there exists a $\sigma$-field
$\mathcal{G}\subset\mathcal{F}=\sigma(\cup_{n\geq 0}\mathcal{F}_n)$
such that for every $n\in\mathbb{N}$ and every $A\in\mathcal{B}(X)$
\[\begin{aligned}
\mathbb{E}(1_{\{e_n\in
A\}}|\mathcal{F}_{n-1})=\mathbb{E}(1_{\{e_n\in A\}}|\mathcal{G})
\end{aligned}\]
and if moreover $(e_n)_{n\geq 1}$ is a sequence of
$\mathcal{G}$-conditionally independent random variables, i.e.\ for
every $n=1,2,\ldots$ and every $A_1,\ldots,A_n\in\mathcal{B}(X)$ we
have
\[\begin{aligned}
\mathbb{E}(1_{\{ e_1\in A_1\}}\cdot \ldots \cdot 1_{\{e_n\in
A_n\}}|\mathcal{G})=\mathbb{E}(1_{\{e_1\in A_1\}}|\mathcal{G})\cdot
\ldots \cdot \mathbb{E}(1_{\{ e_n\in A_n\}}|\mathcal{G}).
\end{aligned}\]
The above concepts were introduced by Kwapie{\'n} and Woyczy{\'n}ski
in \cite{KwWo1}. For details on the subject we refer to the
monographs \cite{delaPG,KwWo} and the references therein. It is also
shown there that for every sequence $(d_n)_{n\geq 1}$ of
$(\F_n)_{n\geq 1}$-adapted random variables there exists another
sequence $(e_n)_{n\geq 1}$ (on a possibly enlarged probability
space) which is tangent to $(d_n)_{n\geq 1}$ and satisfies the (CI)
condition. One easily checks that this sequence is unique in law.
The sequence $(e_n)_{n\geq 1}$ is usually referred to as the {\em
decoupled tangent sequence}.

\begin{example}
Let $(\xi_n)_{n\geq 1}$ be an $(\F_n)_{n\geq 1}$-adapted sequence of
real valued random variables. Let $(\widehat{\xi}_n)_{n\geq 1}$ be
copy of $(\xi_n)_{n\geq 1}$ independent of $\F_{\infty}$. Let
$(v_n)_{n\geq 1}$ be an $(\F_n)_{n\geq 0}$-predictable sequences of
$X$-valued random variables, i.e. each $v_n$ is $\F_{n-1}$
measurable. For $n\geq 1$, define $d_n = \xi_n v_n$ and $e_n =
\widehat{\xi}_n v_n$. Then $(d_n)_{n\geq 1}$ and $(e_n)_{n\geq 1}$
are tangent and $(e_n)_{n\geq 1}$ satisfies the (CI) condition with
$\mathcal{G} = \F_\infty$.
\end{example}

For convenience we will assume below that all martingales start at
zero. This is not really a restriction as can be seen as in
\cite{Burkholder:Geom}.

Recall that a Banach space $X$ is a {\em UMD space} if for some
(equivalently, for all) $p\in (1,\infty)$ there exists a constant
$\beta_{p,X}\ge 1$ such that for every martingale difference
sequence $(d_n)_{n\geq 1}$ in $L^p(\O;X)$, and every $\{-1,
1\}$-valued sequence $(\e_n)_{n\geq 1}$ we have
\begin{equation}\label{eq:UMD}
\Bigl(\E\Bigl\|\sum_{n=1}^N \e_n d_n \Bigr\|^p\Bigr)^\frac1p \leq
\beta_{p,X} \, \Bigl(\E\Bigl\|\sum_{n=1}^N
d_n\Bigr\|^p\Bigr)^\frac1p, \ \ N\geq 1.
\end{equation}
One can show that UMD spaces are reflexive. Examples of UMD spaces
are all Hilbert spaces and the spaces $L^p(S)$ for all $1<p<\infty$
and $\sigma$-finite measure spaces $(S,\Sigma,\mu)$. If $X$ is a UMD
space, then $L^p(S;X)$ is a UMD space for $1<p<\infty$. For an
overview of the theory of UMD spaces we refer the reader to
\cite{Bu3} and references given therein.

The UMD property can also be characterized using a randomization of
the martingale difference sequence. This has been considered in
\cite{Garling:RMTI} by Garling. One has that $X$ is a UMD space if
and only if for some (equivalently, for all) $p\in (1,\infty)$ there
exists a constant $C_p\ge 1$ such that for every martingale
difference sequence $(d_n)_{n\geq 1}$ in $L^p(\O;X)$ we have
\begin{equation}\label{eq:rUMD}
C_p^{-1}\Bigl(\E\Bigl\|\sum_{n=1}^N r_n d_n \Bigr\|^p\Bigr)^\frac1p
\leq \Bigl(\E\Bigl\|\sum_{n=1}^N d_n\Bigr\|^p\Bigr)^\frac1p \leq C_p
\Bigl(\E\Bigl\|\sum_{n=1}^N r_n d_n \Bigr\|^p\Bigr)^\frac1p, \ \
N\geq 1.
\end{equation}
Here $(r_n)_{n\geq 1}$ is a Rademacher sequence independent of
$(d_n)_{n\geq 1}$. In \cite{Garling:RMTI} both inequalities in
\eqref{eq:rUMD} have been studied separately. We will consider a
different splitting of the UMD property below. For Paley-Walsh
martingales the concepts coincide as we will explain below.

\medskip

Let $X$ be a UMD Banach space and let $p\in (1, \infty)$. Let
$(d_n)_{n\geq 1}$ and $(e_n)_{n\geq 1}$ in $L^p(\O;X)$ be tangent
martingale differences, where $(e_n)_{n\geq 1}$ satisfies the (CI)
condition. In \cite{mcconnell:decoup} McConnell and independently
Hitczenko in \cite{hitczenko:notes} have proved that there exists a
constant $C=C(p,X)$ such that
\begin{equation}\label{eq:tangent}
C^{-1}\Big(\E\Big\|\sum_{n=1}^N e_n \Big\|^p\Big)^{\frac1p} \leq
\Big(\E\Big\|\sum_{n=1}^N d_n \Big\|^p\Big)^{\frac1p} \leq
C\Big(\E\Big\|\sum_{n=1}^N e_n \Big\|^p\Big)^{\frac1p},  \ \ N\geq
1.
\end{equation}
Moreover, one may take $C$ to be the UMD constant $\beta_{p,X}$. The
proof of \eqref{eq:tangent} is based on the existence of a biconcave
function for UMD spaces constructed by Burkholder in \cite{Bu2}. In
\cite{montgomery:represmart} Montgomery-Smith has found a proof
based on the definition of the UMD property. The right-hand side of
inequality \eqref{eq:tangent} also holds for $p=1$ as we will show
in Proposition \ref{prop:Xp1}.

If \eqref{eq:tangent} holds for a space $X$, then specializing to
Paley-Walsh martingales will show that $X$ has the UMD property (cf.
\cite{mcconnell:decoup}). Therefore, \eqref{eq:tangent} is naturally
restricted to the class of UMD spaces. Recall that a Paley-Walsh
martingale is a martingale that is adapted with respect to the
filtration $(\sigma(r_1, \ldots, r_n))_{n\geq 1}$, where
$(r_n)_{n\geq 1}$ is a Rademacher sequence. In this note we study
the second inequality in \eqref{eq:tangent}. This seems to be the
most interesting one for applications and we will show that it holds
for a class of Banach spaces which is strictly wider than UMD.

Let $(S,\Sigma, \mu)$ be a $\sigma$-finite measure space. We will
show that the right-hand side inequality in \eqref{eq:tangent} also
holds for $X=L^1(S)$. More generally one may take $X=L^1(S;Y)$,
where $Y$ is a UMD space (see Theorem \ref{thm:main} below). Notice
that $X$ is not a UMD space, since it is not reflexive in general.
It is not clear how to extend the proofs in \cite{hitczenko:notes,
mcconnell:decoup, montgomery:represmart} to this setting.

The right-hand side of \eqref{eq:tangent} has several applications.
For instance it may be used for developing a stochastic integration
theory in Banach spaces \cite{neervenVeraarWeis:stochIntUMD}. With
the same methods as in \cite{neervenVeraarWeis:stochIntUMD} one can
obtain sufficient conditions for stochastic integrability and
one-sided estimates for stochastic integrals for $L^1$-spaces.

Let us recall some convenient notation. For a sequence of $X$-valued
random variables $(\xi_n)_{n\geq 1}$ we will write $\xi_n^* =
\sup_{1\leq m\leq n} \|\xi_m\|$ and $\xi^* = \sup_{n\geq 1}
\|\xi_n\|$.

\section{Results\label{sec:main}}
We say that a Banach space $X$ has the {\em decoupling property for
tangent m.d.s. (martingale difference sequences)} if for all $p\in
[1, \infty)$ there exists a constant $C_{p}$ such that for all
martingales difference sequences $(d_n)_{n\geq 1}$ in $L^p(\O;X)$
and its decoupled tangent sequence $(e_n)_{n\geq 1}$ the estimate
\begin{equation}\label{eq:tangent2}
\Big(\E\Big\|\sum_{n=1}^N d_n \Big\|^p\Big)^{\frac1p} \leq
C_p\Big(\E\Big\|\sum_{n=1}^N e_n \Big\|^p\Big)^{\frac1p}, \ \ N\geq
1
\end{equation}
holds.

Let $p\in [1, \infty)$. Notice that if a martingale difference
sequence $(e_n)_{n\geq 1}$ in $L^p(\O;X)$ satisfies the (CI)
property, then
\begin{equation}\label{eq:starout}
\Big(\E\sup_{N\geq 1}\Big\|\sum_{n=1}^N e_n \Big\|^p\Big)^{\frac1p}
\eqsim \sup_{N\geq 1}\Big(\E\Big\|\sum_{n=1}^N e_n
\Big\|^p\Big)^{\frac1p}.
\end{equation}
This is well-known and easy to prove. Indeed, let
$(\tilde{e}_n)_{n\geq 1}$ be an independent copy of $(e_n)_{n\geq
1}$. Expectation with respect to $(\tilde{e}_n)_{n\geq 1}$ will be
denoted by $\tilde\E$. It follows from Jensen's inequality and the
L\'evy-Octaviani inequalities for symmetric random variables (cf.
\cite[Section 1.1]{KwWo}) applied conditionally that
\[\begin{aligned}
\Big(\E\sup_{N\geq 1}\Big\|\sum_{n=1}^N e_n \Big\|^p\Big)^{\frac1p}
&=  \Big(\E\sup_{N\geq 1}\Big\|\tilde\E\sum_{n=1}^N e_n -\tilde{e}_n
\Big\|^p\Big)^{\frac1p}
\\ & \leq \Big(\E\tilde \E\sup_{N\geq
1}\Big\|\sum_{n=1}^N e_n -\tilde{e}_n\Big\|^p\Big)^{\frac1p}
\\ & \leq 2^{\frac1p}\sup_{N\geq 1}\Big(\E\tilde \E\Big\|\sum_{n=1}^N e_n -\tilde{e}_n\Big\|^p\Big)^{\frac1p}
\\ & \leq 2^{1+\frac1p}\sup_{N\geq 1}\Big(\E\Big\|\sum_{n=1}^N
e_n\Big\|^p\Big)^{\frac1p}.
\end{aligned}
\]
Notice that Doob's inequality is only applicable for $p\in (1,
\infty)$.

\begin{proposition}\label{prop:Xp1}
If $X$ is a UMD space, then $X$ satisfies the decoupling property
for tangent m.d.s.
\end{proposition}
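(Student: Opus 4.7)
For $p \in (1,\infty)$, inequality \eqref{eq:tangent2} is exactly the right-hand side of \eqref{eq:tangent} due to McConnell, Hitczenko, and Montgomery--Smith, so I only need to address $p = 1$.

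Write $f_N = \sum_{n=1}^N d_n$, $g_N = \sum_{n=1}^N e_n$, $f^* = \sup_N \|f_N\|$, and $g^* = \sup_N \|g_N\|$. My strategy for $p=1$ is to reduce to the $L^2$ case of \eqref{eq:tangent} via a Davis decomposition of $(d_n)$. Split $d_n = a_n + b_n$, where $(a_n)_{n\ge 1}$ are martingale differences satisfying $\|a_n\| \le 4\, d_{n-1}^*$ almost surely (dominated by a predictable envelope) and $(b_n)_{n\ge 1}$ are martingale differences with $\sum_n \|b_n\| \le C f^*$ almost surely. Because the truncation defining this splitting is $\F_{n-1}$-measurable and the tangent relation is conditional on $\F_{n-1}$, applying the same truncation to $(e_n)$ produces sequences $(a_n')$ and $(b_n')$ tangent to $(a_n)$ and $(b_n)$ respectively. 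As the UMD decoupling inequality \eqref{eq:tangent} applies to any tangent pair of martingale differences (by uniqueness in law of the decoupled tangent sequence), it is available for these pieces in $L^2$.

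The bounded piece yields $\|\sum_n a_n\|_{L^2} \le \beta_{2,X}\, \|\sum_n a_n'\|_{L^2}$, and a stopping argument at the first time $d_{n-1}^*$ exceeds a level $\lambda$ converts this $L^2$ estimate into a weak-$L^1$ bound on $\{f^* > \lambda\}$. The tail piece is handled directly by the triangle inequality and the pointwise bound $\sum_n \|b_n\| \le C f^*$. Integrating the weak-$L^1$ bound in $\lambda$, absorbing the resulting $f^*$-term into the left-hand side, and then applying \eqref{eq:starout} on the decoupled side to replace $\E g^*$ by a constant times $\sup_N \E\|g_N\|$ delivers \eqref{eq:tangent2} for $p=1$.

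The main obstacle is the good-$\lambda$ bookkeeping in the absorption step: the truncation level must be chosen so that the $f^*$-term coming from the tail estimate can be absorbed into the left-hand side with controlled constants and no circularity. This is the standard delicate feature of Davis's method; here it is enabled precisely by \eqref{eq:starout}, which provides the maximal control on the decoupled side that Doob's inequality cannot supply at $p = 1$.
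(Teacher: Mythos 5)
Your overall architecture (reduce $p=1$ to the $p>1$ case via a Davis decomposition, treat the bounded piece by a good-$\lambda$ argument driven by the UMD decoupling inequality, and finish with \eqref{eq:starout}) is the same as the paper's, but there is a genuine gap in your treatment of the tail piece. You bound it by $\sum_n\|b_n\|\le C f^*$ and propose to absorb the resulting $\E f^*$ term into the left-hand side by tuning the truncation level. This cannot work: whatever threshold $K d_{n-1}^*$ you truncate at, the big-jump part always contains at least the single largest increment, so the best available bound is of the form $\sum_n\|v_n\|\le \frac{K}{K-1}\,d^*$ with $d^*\le 2f^*$, and the constant in front of $\E f^*$ is never below $1$ (consider a martingale with a single nonzero increment $d_1$: the entire martingale lands in the tail piece and the proposed inequality becomes $\E\|f\|\le \E f^*$, which is vacuous). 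The correct move is not absorption but a direct comparison of $d^*$ with the \emph{decoupled} side: for tangent sequences one has $\|d^*\|_{L^q}\le 2^{1/q}\|e^*\|_{L^q}$ (\cite[Theorem 5.2.1]{KwWo}, or \cite[Lemma 1]{HitComp} in the conditionally symmetric setting), after which $\|e^*\|\le 2\|g^*\|$ and \eqref{eq:starout} bound everything by $\sup_N\E\|g_N\|$. This is the step \eqref{eq:hulp3} in the paper, and without it your argument is circular.

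Two further points need repair. First, the almost sure bound $\sum_n\|b_n\|\le Cf^*$ is false for the compensated Davis decomposition: only the uncompensated part $\sum_n\|v_n\|$ admits a pointwise bound by $2d^*$; the compensator $\sum_n\E(\|v_n\|\,|\,\F_{n-1})$ is controlled only in $L^q$ via the Garsia--Neveu type inequality (\cite[Proposition 25.21]{kallenberg}), which is all you need but is not what you wrote. Second, the truncation indicator $\one_{\{\|d_n\|\le 2d_{n-1}^*\}}$ is \emph{not} $\F_{n-1}$-measurable (it depends on $d_n$); tangency of the truncated-and-compensated pieces still holds because, conditionally on $\F_{n-1}$, the same Borel map is applied to variables with the same conditional law, but preservation of the (CI) condition for $(a_n')$ requires an argument -- the paper handles this by first passing to Montgomery-Smith's concrete representation \cite{montgomery:represmart}, where it becomes transparent. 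Your appeal to ``uniqueness in law of the decoupled tangent sequence'' does not by itself establish that the truncated decoupled sequence still satisfies (CI).
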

\begin{proof}
The case that $p\in (1, \infty)$ is already contained in
\eqref{eq:tangent}, but the case $p=1$ needs some comment. In
\cite{hitczenko:notes} it has been proved that for all $p\in
[1,\infty)$ there exists a constant $C_{p,X}$ such that for all
tangent martingale difference sequences $(d_n)_{n\geq 1}$ and
$(e_n)_{n\geq 1}$ which are conditionally symmetric one has
\begin{equation}\label{eq:condsymmcase}
C_{p,X}^{-1} \|g^*_n\|_{L^p(\O;X)} \leq \|f^*_n\|_{L^p(\O;X)} \leq
C_{p,X} \|g^*_n\|_{L^p(\O;X)}, \  n\geq 1
\end{equation}
where $f_n = \sum_{k=1}^n d_k$ and $g_n = \sum_{k=1}^n e_k$. It is
even shown that $\E\Phi(f^*_n)\leq C_{p,X,\Phi} \Phi(g^*_n)$ for
certain convex functions $\Phi$. Since \cite{hitczenko:notes} is
unpublished we briefly sketch the argument for convenience. Some
arguments are explained in more detail in the proof of Theorem
\ref{thm:chardec}.

Let $\Phi:\R_+\to \R_+$ be a continuous increasing function such
that for some $\alpha>0$, $\Phi(2t)\leq \alpha\Phi(t)$ for all
$t\geq 0$. Let $N$ be an arbitrary index. Let $(d_n)_{n\geq 1}$ and
$(e_n)_{n\geq 1}$ be conditionally symmetric and tangent martingale
difference sequences, with $d_n=e_n=0$ for $n>N$. Let $f$ and $g$ be
the corresponding martingales. By \eqref{eq:tangent} it follows that
for all $p\in (1, \infty)$,
\begin{equation}\label{eq:weaktypeumd}
\lambda \P(f_n^*\geq \lambda) \leq C_p \|g_n\|_{L^p(\O;X)}, \ \
\lambda\geq 0.
\end{equation}
Let $a_n = \max_{m<n}\{\|d_m\|, \|e_m\|\}$, $d_n' =
d_n\one_{\|d_n\|\leq 2 a_n}$, $d_n''=d_n \one_{\|d_n\|>2a_n}$,
$e_n'=e_n\one_{\|e_n\|\leq 2a_n}$, $e_n'' = e_n
\one_{\|e_n\|>2a_n}$. By the conditional symmetry, these sequences
denote martingale difference sequences. The corresponding
martingales will be denoted by $f', f'',g',g''$. Then we have
$\|d_n''\|\leq 2(a_{n+1}-a_n)$. Therefore, it follows from
$a_{N+1}=0$ and \cite[Lemma 1]{HitComp} that
\begin{equation}\label{eq:hulpHit1}
\E\Phi(f_N''^*)\leq \E\Phi\Big(\sum_{n=1}^N \|d_n''\| \Big)\leq
\alpha \E\Phi(a_N^*)\leq 2\alpha \E\Phi(e^*_N).
\end{equation}
Now for $\delta>0$, $\beta>1+\delta$, $\lambda>0$ let
\[\mu = \inf\{n\geq 0: f_n'>\lambda\}, \ \ \ \nu=\inf\{n\geq 0: f_n'>\beta\lambda\},\]
\[\sigma=\inf\{n\geq 0: g_n'>\delta\lambda \ \text{or} \ a_{n+1}>\delta\lambda\}.\]
As in \cite{Burkholder:fu} it follows from \eqref{eq:weaktypeumd}
applied to $f'$ and $g'$ and \cite[Lemma 7.1]{Burkholder:fu} that
\begin{equation}\label{eq:hulpHit2}
\E \Phi(f_N'^*)\leq c (\E\Phi(g_N'^*)+ \E\Phi(a_N^*))\leq c'
\E\Phi(g_N^*).
\end{equation}
Now \eqref{eq:condsymmcase} with $n=N$ follows from
\eqref{eq:hulpHit1} and \eqref{eq:hulpHit2} with $\Phi(x) =
\|x\|^p$.

By \eqref{eq:starout} and \eqref{eq:condsymmcase} it follows that
for all $n\geq 1$,
\[ \|f_n\|_{L^p(\O;X)} \lesssim C_{p,X} \|g_n\|_{L^p(\O;X)}, \ n\geq 1.\]
By the same symmetrization argument as in \cite[Lemma
2.1]{hitczenko:decoupineq} we obtain that for all decoupled tangent
martingale difference sequences $(d_n)_{n\geq 1}$ and $(e_n)_{n\geq
1}$ we have
\[\|f_n\|_{L^p(\O;X)} \lesssim C_{p,X} \|g_n\|_{L^p(\O;X)}, \  n\geq 1,\]
where again $f$ and $g$ are the martingales corresponding to
$(d_n)_{n\geq 1}$ and $(e_n)_{n\geq 1}$. This proves the result.
\end{proof}

Next we give a negative example.
\begin{example}\label{ex:c0}
For every $p\in [1, \infty)$ the space $c_0$ does not satisfy
\eqref{eq:tangent2}. In particular $c_0$ does not satisfy the
decoupling property for tangent m.d.s.
\end{example}
\begin{proof}
We specialize \eqref{eq:tangent2} to Paley-Walsh martingales, i.e.
$d_n = r_n f_n(r_1, \ldots, r_{n-1})$ and $e_n =  \tilde{r}_n
f_n(r_1, \ldots, r_{n-1})$, where $(r_n)_{n\geq 1}$ and
$(\tilde{r}_n)_{n\geq 1}$ are two independent Rademacher sequences
and $f_n:\{-1,1\}^{n-1}\to X$. It then follows from
\eqref{eq:tangent2} that
\[\begin{aligned}
\Big(\E\Big\|\sum_{n=1}^N r_n f_n(r_1, \ldots, r_{n-1})
\Big\|^p\Big)^{\frac1p} & \leq C\Big(\E\Big\|\sum_{n=1}^N
\tilde{r}_n f_n(r_1, \ldots, r_{n-1}) \Big\|^p\Big)^{\frac1p}
\\ & = C\Big(\E\Big\|\sum_{n=1}^N \tilde{r}_n r_n f_n(r_1,
\ldots, r_{n-1}) \Big\|^p\Big)^{\frac1p}, \ \ N\geq 1.
\end{aligned}\]
This inequality does not hold for the space $c_0$ as follows from
\cite[p.\ 105]{Garling:RMTI}.
\end{proof}

As a consequence of Example \ref{ex:c0} and the Maurey-Pisier
theorem we obtain the following result.
\begin{corollary}
If a Banach space $X$ satisfies the decoupling property for tangent
m.d.s. then it has finite cotype.
\end{corollary}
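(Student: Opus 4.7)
The plan is to argue by contraposition: suppose $X$ does not have finite cotype and derive a contradiction with the assumed decoupling property by showing that uniformly bounded failures of the decoupling inequality can be transplanted from $c_0$ into $X$.

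By the Maurey--Pisier theorem, a Banach space $X$ fails to have finite cotype if and only if $c_0$ is finitely representable in $X$. This means that for every $n\geq 1$ and every $\lambda>1$ there exists an $n$-dimensional subspace $Y_n\subset X$ together with a linear isomorphism $T_n:\ell^\infty_n\to Y_n$ satisfying $\|T_n\|\,\|T_n^{-1}\|\leq \lambda$. Equivalently, up to the factor $\lambda$, one may treat $\ell^\infty_n$ as living isometrically inside $X$.

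Next I would transport the failure exhibited in Example~\ref{ex:c0}. The construction from \cite[p.~105]{Garling:RMTI} produces, for any $p\in[1,\infty)$ and any constant $M$, some $N$ and a Paley--Walsh Rademacher chaos with coefficients $f_n(r_1,\ldots,r_{n-1})$ in some $\ell^\infty_{N'}$ for which the decoupling inequality \eqref{eq:tangent2} fails with constant $M$. Applying $T_n$ to such coefficients yields $X$-valued Paley--Walsh martingale differences $d_n = T_n(r_n f_n(r_1,\ldots,r_{n-1}))$ and corresponding decoupled tangent differences $e_n = T_n(\tilde r_n f_n(r_1,\ldots,r_{n-1}))$. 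Since $T_n$ is a linear isomorphism it preserves adaptedness, the distributional condition defining tangency, and the $\mathcal{G}$-conditional independence structure defining the (CI) condition, so $(e_n)_{n\geq 1}$ remains a decoupled tangent sequence of $(d_n)_{n\geq 1}$.

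The key estimate is then purely quantitative: if \eqref{eq:tangent2} holds in $X$ with constant $C_p$, then
\[
\Bigl(\E\Bigl\|\sum_{n=1}^N r_n f_n\Bigr\|_{\ell^\infty_{N'}}^p\Bigr)^{1/p}
\leq \|T_n^{-1}\|\,\Bigl(\E\Bigl\|\sum_{n=1}^N d_n\Bigr\|_X^p\Bigr)^{1/p}
\leq C_p\,\|T_n^{-1}\|\,\|T_n\|\,\Bigl(\E\Bigl\|\sum_{n=1}^N \tilde r_n f_n\Bigr\|_{\ell^\infty_{N'}}^p\Bigr)^{1/p},
\]
so that the decoupling inequality in $c_0$-style Rademacher form would hold with constant $\lambda C_p$. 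Choosing $\lambda$ close to $1$ (say $\lambda=2$) this contradicts Garling's unboundedness and hence Example~\ref{ex:c0}. The only point requiring care, and the main (mild) obstacle, is to check that the decoupled tangent sequence in $X$ one obtains via $T_n$ is indeed the canonical decoupled tangent sequence of $(d_n)$ in the sense used in the definition of the decoupling property; this follows because tangency and (CI) are formulated entirely in terms of Borel sets and conditional distributions, and Borel isomorphisms induced by linear isomorphisms respect both.
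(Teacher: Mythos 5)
Your argument is correct and is exactly the route the paper intends: the paper derives the corollary ``as a consequence of Example \ref{ex:c0} and the Maurey--Pisier theorem,'' i.e.\ by finite representability of $c_0$ transporting the uniformly failing finite-dimensional Paley--Walsh examples from $\ell^\infty_{N'}$ into $X$, and you have simply filled in the details (including the correct observation that tangency and the (CI) condition are preserved under the bounded linear isomorphisms). The only cosmetic point is the indexing of $T_n$, which should be a single isomorphism defined on the finite-dimensional $\ell^\infty$-space containing all coefficient values of a given counterexample rather than one map per martingale index.
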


In \cite{Garling:RMTI} Garling studied both inequalities in
\eqref{eq:rUMD} separately. A space for which both inequalities of
\eqref{eq:rUMD} hold is a UMD space. Inequality \eqref{eq:tangent}
suggests another way to split the UMD property into two parts. We do
not know how the properties from \cite{Garling:RMTI} are related to
this. In the following remark we observe that they are related for
certain martingales.
\begin{remark}
\renewcommand{\labelenumi}{(\roman{enumi})}
\renewcommand{\theenumi}{(\roman{enumi})}
\
\begin{enumerate}
\item From the construction in Example \ref{ex:c0} one can see that the decoupling
property for Paley-Walsh martingales is the same property as
\begin{equation}\label{eq:GaPW}
\Big(\E\Big\|\sum_{n=1}^N d_n \Big\|^p\Big)^{\frac1p} \leq
C\Big(\E\Big\|\sum_{n=1}^N \tilde{r}_n d_n\Big\|^p\Big)^{\frac1p}
\end{equation}
from \cite{Garling:RMTI} for Paley-Walsh martingales. Here
$(d_n)_{n\geq 1}$ is a Paley-Walsh martingale difference sequence
and $(\tilde r_n)_{n\geq 1}$ is a Rademacher sequence independent
from $(d_n)_{n\geq 1}$.
\item One may also consider the relation between the first
inequality in \eqref{eq:tangent} and the reverse of estimate
\eqref{eq:GaPW}. These, too, are equivalent when restricted to
Paley-Walsh martingales. However, on the whole these inequalities
are of less interest because there are no spaces known that satisfy
them and do not satisfy the UMD property (cf.
\cite{geiss:counterexample}).
\end{enumerate}
\renewcommand{\labelenumi}{(\arabic{enumi})}
\renewcommand{\theenumi}{(\arabic{enumi})}
\end{remark}

\begin{problem}[\cite{geiss:counterexample}]
Is there a Banach space which is not UMD, but satisfies the reverse
estimate of \eqref{eq:GaPW} ?
\end{problem}
It is known that if the reverse of \eqref{eq:GaPW} holds for a
Banach space $X$, then $X$ has to be superreflexive (cf.
\cite{Garling:RMTI,geiss:counterexample}).

\begin{problem}
If \eqref{eq:GaPW} holds for all Paley-Walsh martingales, does this
imply \eqref{eq:GaPW} for arbitrary $L^p$-martingales?
\end{problem}
Recall from \cite{Bu3,maurey:BSgeometry} that for \eqref{eq:UMD}
such a result holds.

\begin{problem}
Does \eqref{eq:tangent2} for Paley-Walsh martingales (or
equivalently \eqref{eq:GaPW}) imply \eqref{eq:tangent2} for
arbitrary $L^p$-martingales?
\end{problem}
Recall from \cite{mcconnell:decoup} that this is true if one
considers \eqref{eq:tangent} instead of \eqref{eq:tangent2}.

\begin{problem}
If a Banach lattice satisfies certain convexity and smoothness
assumptions, does this imply that it satisfies the decoupling
property \eqref{eq:tangent2}?
\end{problem}
This problem should be compared with the example in
\cite{Garling:RMTI}, where Garling constructs a Banach lattice which
satisfies upper $2$ and lower $q$ estimates with $q>4$, but which
does not satisfy \eqref{eq:GaPW} for arbitrary $L^p$-martingales.

In the next theorem and remark we characterize the decoupling
property for tangent m.d.s. for a space $X$.

\begin{theorem}\label{thm:chardec}
Let $X$ be a Banach space. The following assertions are equivalent:
\begin{enumerate}
\item $X$ has the decoupling property \eqref{eq:tangent2} for tangent m.d.s.
\item There exists a constant $C$ such that for all martingales difference sequences
$(d_n)_{n\geq 1}$ in $L^1(\O;X)$ and its decoupled tangent sequence
$(e_n)_{n\geq 1}$ one has that
\[
\E\Big\|\sum_{n=1}^N d_n \Big\|\leq C\E\Big\|\sum_{n=1}^N e_n
\Big\|, \ \ N\geq 1.
\]
\item There exists a constant $C$ such that for all martingales difference sequences
$(d_n)_{n\geq 1}$ in $L^1(\O;X)$ and its decoupled tangent sequence
$(e_n)_{n\geq 1}$ one has that
\[
\lambda\P\Big(\Big\|\sum_{n=1}^N d_n \Big\|>\lambda \Big) \leq
C\E\Big\|\sum_{n=1}^N e_n \Big\|, \ \lambda\geq 0, \ N\geq 1.
\]
\end{enumerate}
\end{theorem}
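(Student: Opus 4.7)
The implications (1) $\Rightarrow$ (2) and (2) $\Rightarrow$ (3) are trivial: (2) is the $p=1$ case of (1), and (3) follows from (2) by Markov's inequality. The real work is (3) $\Rightarrow$ (1), which I plan to carry out by rerunning the proof of Proposition \ref{prop:Xp1}, with hypothesis (3) substituted for the UMD weak-type bound \eqref{eq:weaktypeumd}.

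The first step is a symmetrization as in \cite[Lemma 2.1]{hitczenko:decoupineq}, which reduces the claim to conditionally symmetric tangent pairs (and hypothesis (3) transfers to the enlarged pair, with the same constant $C$). For such a pair I perform the Davis decomposition of \cite{hitczenko:notes}: with $a_n=\max_{m<n}\{\|d_m\|,\|e_m\|\}$, split $d_n=d_n'+d_n''$ and $e_n=e_n'+e_n''$ at the threshold $2a_n$; conditional symmetry ensures all four sequences remain martingale differences, with the primed and double-primed pairs still tangent. The jump part is handled by $\|d_n''\|\le 2(a_{n+1}-a_n)$ and \cite[Lemma 1]{HitComp}, giving $\E(f_N''^{*})^p\lesssim\E(a_N^*)^p\lesssim\E(e_N^*)^p\eqsim\E\|g_N\|^p$, the last equivalence coming from \eqref{eq:starout}.

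The crux is a Burkholder good-$\lambda$ inequality for the small-jump martingale $f'$. From (3) I first derive its maximal version: stopping the pair at $\tau=\inf\{n:\|f_n\|>\lambda\}$ preserves both tangency and decoupling (because $\{n\le\tau\}\in\F_{n-1}$), so (3) applied to the stopped pair together with \eqref{eq:starout} yields $\lambda\,\P(f_N^*>\lambda)\lesssim\E\|g_N\|$. With the stopping times $\mu=\inf\{n:\|f_n'\|>\lambda\}$, $\nu=\inf\{n:\|f_n'\|>\beta\lambda\}$ and $\sigma=\inf\{n:\|g_n'\|>\delta\lambda\text{ or }a_{n+1}>\delta\lambda\}$, I apply this maximal weak-type bound to the predictably shifted tangent pair $(d_n'\one_{\mu<n\le\sigma},e_n'\one_{\mu<n\le\sigma})$. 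On $\{\nu\le N,\sigma>N\}$ the small-jump truncation forces $\|d_\mu'\|\le 2a_\mu\le 2\delta\lambda$, so the shifted $f'$ attains size $>(\beta-1-2\delta)\lambda$, while the shifted $g'$ is uniformly of size $O(\delta\lambda)\,\one_{\mu\le N}$. This yields
\[
\P(\nu\le N,\sigma>N)\le\frac{c\delta}{\beta-1-2\delta}\,\P(f_N'^{*}>\lambda),
\]
and \cite[Lemma 7.1]{Burkholder:fu} then extracts $\E(f_N'^{*})^p\lesssim\E(g_N'^{*})^p+\E(a_N^*)^p$ for every $p\ge 1$. Combining with the jump estimate and using $\E(a_N^*)^p\lesssim\E\|g_N\|^p$ gives (1).

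The main obstacle is the good-$\lambda$ step: one must verify that predictably shifting a tangent pair with (CI) preserves the hypotheses that make (3) applicable, which reduces to the standard observation that tangency and (CI) are both invariant under multiplication by $\F_{n-1}$-measurable indicators; and one must carefully track the small parameter $\delta$ so that the factor $c\delta/(\beta-1-2\delta)$ can be driven below any prescribed level, which is exactly what the integration producing $\|f'^{*}_N\|_p\lesssim\|g'^{*}_N\|_p$ from the good-$\lambda$ bound requires. (A harmless preliminary truncation ensures $\E(f_N'^{*})^p<\infty$ before absorbing this term into the left-hand side.)
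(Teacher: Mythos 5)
Your reduction (1)$\Rightarrow$(2)$\Rightarrow$(3) is fine and your good-$\lambda$ strategy for (3)$\Rightarrow$(1) is the right general shape, but there is a genuine gap exactly at the point you label ``the main obstacle'': it is \emph{not} true that the (CI) condition is preserved under multiplication by arbitrary $\F_{n-1}$-measurable indicators. Tangency is preserved, but (CI) requires the $e_n$'s to stay conditionally independent given $\G$ with $\E(\one_{\{e_n\in A\}}|\F_{n-1})=\E(\one_{\{e_n\in A\}}|\G)$, and this survives only when the multipliers are measurable with respect to the past of the $d$'s alone (the $x$-coordinates in Montgomery-Smith's representation $d_n=h_n(x_1,\ldots,x_n)$, $e_n=h_n(x_1,\ldots,x_{n-1},y_n)$). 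Your argument violates this twice. First, the Davis truncation at the threshold $2a_n$ with $a_n=\max_{m<n}\{\|d_m\|,\|e_m\|\}$ makes $e_n'$ depend on $y_1,\ldots,y_{n-1}$ through $a_n$, so $(e_n')_{n\geq 1}$ is no longer conditionally independent given $\G$ and is not the decoupled tangent sequence of $(d_n')_{n\geq 1}$. Second, and more seriously, your stopping time $\sigma$ is defined through $\|g_n'\|$ and $a_{n+1}$, which again depend on the $y$-coordinates, so the transform $(d_n'\one_{\{\mu<n\le\sigma\}},e_n'\one_{\{\mu<n\le\sigma\}})$ is a tangent pair but not a decoupled one, and hypothesis (3) --- assumed only for decoupled pairs --- cannot be applied to it. (Your preliminary step with $\tau=\inf\{n:\|f_n\|>\lambda\}$ is fine precisely because $\tau$ depends only on the $d$-past.) This is not a removable technicality: if the weak-type bound of (3) extended to all conditionally symmetric tangent pairs, then by the symmetry of that class one would get the two-sided inequality \eqref{eq:tangent} and hence UMD, which Theorem \ref{thm:main} shows is false for $X=L^1$.

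The paper's proof is built precisely to avoid this. It passes to Montgomery-Smith's concrete representation, performs the Davis decomposition with the threshold $2 h_{n-1}^*(x_1,\ldots,x_{n-1})$ --- the running maximum of the $d$'s only, a function of the $x$-coordinates --- adding the compensators $\E(u_n|\calL_{n-1})$ since conditional symmetry is neither available nor needed; and, as the key new ingredient, it replaces $\|g_n^{(1)}\|$ in the definition of $\sigma$ by the \emph{predictable} quantity $(\E(\|g_n^{(1)}\|^p|\G))^{1/p}$, which is $\calL_{n-1}$-measurable, and replaces $a_{n+1}$ by $4d_n^*$. Then $\one_{\{\mu<k\le\nu\wedge\sigma\}}$ is measurable in the $x$-past, the transforms $F$ and $G$ remain a decoupled tangent pair to which (3) applies, and the good-$\lambda$ estimate closes with the majorant $(\E(\|g^{(1)}\|^p|\G))^{1/p*}\vee 4d^*$; the restriction $q\ge p$ is then what lets one pass from this conditional moment back to $\|g^{(1)*}\|_{L^q}$. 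To salvage your outline you would have to redo both the truncation and the stopping in terms of the $d$-past and of $\G$-conditional moments of $g$, which is exactly the paper's argument.
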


Although characterizations of the above form are standard in the
context of vector valued martingales (cf.
\cite{Burkholder:Geom,Garling:RMTI}), the proof of the implication
(3) $\Rightarrow$ (1) requires some new ideas.

\begin{remark}\label{rem:pq}
\renewcommand{\labelenumi}{(\roman{enumi})}
\renewcommand{\theenumi}{(\roman{enumi})}
\
\begin{enumerate}
\item Instead of (2) one could assume that \eqref{eq:tangent2} holds
for some $p\in [1, \infty)$. Let us call this property (2)$_p$. By
the Markov inequality (2)$_p$ implies in particular that
\begin{equation}\label{eq:3prime}
\lambda^p\P\Big(\Big\|\sum_{n=1}^N d_n \Big\|>\lambda \Big) \leq
C\E\Big\|\sum_{n=1}^N e_n \Big\|^p, \ \ N\geq 1
\end{equation} which
we call (3)$_p$. We do not know whether (2)$_p$ or (3)$_p$ is
equivalent to (1). However in proof below we actually show that if
(3)$_p$ holds for some $p\in [1, \infty)$, then (2)$_q$ holds for
arbitrary $q\geq p$.
\item The statements (1), (2) and (3) of Theorem \ref{thm:chardec} are also equivalent to (1), (2) and (3)
with $\Big\|\sum_{n=1}^N d_n \Big\|$ replaced by $\sup_{N\geq
1}\Big\|\sum_{n=1}^N d_n \Big\|$ and $\Big\|\sum_{n=1}^N e_n \Big\|$
replaced by $\sup_{N\geq 1}\Big\|\sum_{n=1}^N e_n \Big\|$. This
follows from the proof below, and from \eqref{eq:starout}.
\item Condition (3) (in the form with suprema on the left-hand side) clearly implies that there exists a constant $C$ such that
 for all martingales difference sequences
$(d_n)_{n\geq 1}$ in $L^1(\O;X)$ and its decoupled tangent sequence
$(e_n)_{n\geq 1}$ one has that
\[\text{if} \ \ \sup_{N\geq 1} \Big\|\sum_{n=1}^N d_n\Big\|>1 \ \ \text{a.s. then} \ \ \E\sup_{N\geq 1}\Big\|\sum_{n=1}^N e_n\Big\|\geq C.\]
The converse holds as well as may be shown with the same argument as
in \cite[Theorem 1.1]{Burkholder:Geom}.
\end{enumerate}
\renewcommand{\labelenumi}{(\arabic{enumi})}
\renewcommand{\theenumi}{(\arabic{enumi})}
\end{remark}

\begin{problem}
Does inequality (2)$_p$ as defined in part (i) of Remark
\ref{rem:pq} imply statement (1) in Theorem \ref{thm:chardec}?
\end{problem}

\begin{proof}[Proof of Theorem \ref{thm:chardec}]
The implications (1) $\Rightarrow$ (2) $\Rightarrow$ (3) are
obvious. Therefore, we only need to show (3) $\Rightarrow$ (1). We
will actually show what is stated in Remark \ref{rem:pq}: If
\eqref{eq:3prime} holds for some $p\in [1, \infty)$, then
\eqref{eq:tangent2} holds for all $q\geq p$. This in particular
shows that (3) implies (1).

Assume that for some $p\in[1, \infty)$, \eqref{eq:3prime} holds for
all martingale difference sequences $(d_n)_{n\geq 1}$ and its
decoupled tangent sequence $(e_n)_{n\geq 1}$. Let $q\in [p, \infty)$
be arbitrary and fix an arbitrary  $X$-valued martingale difference
sequence $(d_n)_{n\geq 1}$ with its decoupled tangent sequence
$(e_n)_{n\geq 1}$. We will show that there is a constant $C$ such
that
\begin{equation}\label{eq:toshow}
\Big(\E\Big\|\sum_{n=1}^N d_n \Big\|^q\Big)^{\frac1q} \leq
C\Big(\E\Big\|\sum_{n=1}^N e_n \Big\|^q\Big)^{\frac1q}, \ \ N\geq 1.
\end{equation}
Fixing $N$, we clearly may assume that $d_n$ and $e_n$ are non-zero
only if $n\leq N$. We write $f_n = \sum_{k=1}^n d_k$, $g_n =
\sum_{k=1}^n e_k$ and $f = \limn f_n$, $g=\limn g_n$. It suffices to
show that $\|f\|_{L^q}\leq \|g\|_{L^q}$.

\smallskip
{\em Step 1. Concrete representation of decoupled tangent
sequences:}

By Montgomery-Smith's representation theorem
\cite{montgomery:represmart} we can find functions $h_n\in
L^p([0,1]^n;X)$ for $n\geq 1$ such that
\[\int_0^1 h_n(x_1, \ldots, x_n) \, dx_n = 0 \]
for almost all $x_1, \ldots, x_{n-1}$ and such that if we define
$\widehat{d}_n, \widehat{e}_n:[0,1]^{\N}\times[0,1]^{\N}\to X$ as
\[\begin{aligned}
\widehat{d}_n((x_n)_{n\geq 1}, (y_n)_{n\geq 1}) &= h_n(x_1, \ldots,
x_{n-1}, x_n)
\\ \widehat{e}_n((x_n)_{n\geq 1}, (y_n)_{n\geq 1}) &= h_n(x_1, \ldots,
x_{n-1}, y_n),
\end{aligned}\] then the sequence $(\widehat{d}_n,
\widehat{e}_n)_{n\geq 1}$ has the same law as $({d}_n, {e}_n)_{n\geq
1}$. Therefore, it suffices to show \eqref{eq:toshow} with $d_n$ and
$e_n$ replaced by $\widehat{d}_n$ and $\widehat{e}_n$. For
convenience set $h_0 = d_0 = e_0 = 0$.

For all $n\geq 1$ let $\widehat{\F}_n = \calL_n \otimes \calL_n$,
where $\calL_n$ is the minimal complete $\sigma$-algebra on
$[0,1]^{\N}$ for which the first $n$ coordinates are measurable. Let
$\widehat{\mathcal{G}} = \sigma\Big(\bigcup_{n\geq 1}\calL_n \otimes
\calL_0\Big)$. Then $(\widehat{d}_n)_{n\geq 1}$ and
$(\widehat{e}_n)_{n\geq 1}$ are $(\F_n)_{n\geq 0}$-tangent and
$(\widehat{e}_n)_{n\geq 1}$ satisfies condition (CI) with
$\widehat{\mathcal{G}}$.

We will use the above representation in the rest of the proof, but
for convenience we will leave out the hats in the notation.

\smallskip

{\em Step 2. The Davis decomposition:}

We may write $h_n = h_n^{(1)} + h_n^{(2)}$, where
$h_n^{(1)},h_n^{(2)}:[0,1]^{\N}\to X$ are given by
\[h_n^{(1)} = u_n - \E(u_n|\calL_{n-1})\]
\[h_n^{(2)} = v_n - \E(u_n|\calL_{n-1}),\]
where $u_n,v_n:[0,1]^n\to X$ are defined as
\[u_n(x_1, \ldots, x_n) =h_n(x_1, \ldots, x_n) \one_{\|h_n(x_1, \ldots,
x_n)\|\leq 2 \|h_{n-1}^*(x_1, \ldots, x_{n-1})\|}\]
\[
v_n(x_1, \ldots, x_n) =h_n(x_1, \ldots, x_n) \one_{\|h_n(x_1,
\ldots, x_n)\|> 2 \|h_{n-1}^*(x_1, \ldots, x_{n-1})\|}.
\]
Notice that for the conditional expectation $\E(u_n|\calL_{n-1})$ we
may use the representation
\[(x_m)_{m\geq 1} \mapsto \int_0^1 h_n(x_1, \ldots, x_n) \, dx_n.\]
For $i=1, 2$ define
\[d_n^{(i)}((x_n)_{n\geq 1}, (y_n)_{n\geq 1}) = h_n^{(i)}(x_1, \ldots, x_{n-1}, x_n)\]
\[e_n^{(i)}((x_n)_{n\geq 1}, (y_n)_{n\geq 1}) = h_n^{(i)}(x_1, \ldots, x_{n-1}, y_n)\]
Then for $i=1, 2$ it holds that $(d_n^{(i)})_{n\geq 1}$ and
$(e_n^{(i)})_{n\geq 1}$ are tangent and the latter satisfies
condition (CI). For $i=1, 2$ write $f_n^{(i)} = \sum_{k=1}^n
d_n^{(i)}$ and $g_n^{(i)} = \sum_{k=1}^n e_n^{(i)}$.

We will now proceed with the estimates. The first part is rather
standard, but we include it for convenience of the reader. The
second part is less standard and is given in Step 3. As in \cite[p.\
33]{Burkholder:fu} one has
\begin{equation}\label{eq:hulp1}
\sum_{n\geq 1} \|v_n\| \leq 2 \|d^*\|.
\end{equation} It follows from
\cite[Proposition 25.21]{kallenberg} that
\begin{equation}\label{eq:hulp2}
\Big\|\sum_{n\geq 1} \E(\|v_n\||\calL_{n-1})\Big\|_{L^q} \leq
q\Big\|\sum_{n\geq 1} \|v_n\|\Big\|_{L^q} \leq 2 q\|d^*\|_{L^q}.
\end{equation}
Now as in \cite[p.\ 33]{Burkholder:fu} we obtain that
\begin{equation}\label{eq:f2estd}
\begin{aligned} \|f^{(2)*}\|_{L^q}&\leq \Big\|\sum_{n\geq 1}
\|v_n\|\Big\|_{L^q} + \Big\|\sum_{n\geq 1} \|\E
(u_n|\calL_{n-1})\|\Big\|_{L^q}
\\ & \leq 2 \|d^*\|_{L^q} + \Big\|\sum_{n\geq 1} \|\E(v_n|\calL_{n-1})\|\Big\|_{L^q}
\\ & \leq (2 +2 q) \|d^*\|_{L^q},
\end{aligned}
\end{equation}
where we used \eqref{eq:hulp1}, \eqref{eq:hulp2} and
$\E(u_n|\calL_{n-1})= -\E(v_n|\calL_{n-1})$. By \cite[Theorem
5.2.1]{KwWo} and \eqref{eq:starout} 
\begin{equation}\label{eq:hulp3} \|d^*\|_{L^q}\leq
2^{\frac1q} \|e^*\|_{L^q}\leq 2^{1+\frac1q} \|g^*\|_{L^q}\leq
c_q\|g\|_{L^q},
\end{equation}
where $c_q$ is a constant. This shows that
\[ \|f^{(2)}\|_{L^q}\leq (2 +2 q) c_q \|g\|_{L^q}.\]

Next we estimate $f^{(1)}$. We claim that there exists a constant
$c_q'$ such that
\begin{equation}\label{eq:claim}
\|f^{(1)*}\|_{L^q}\leq c_q'\Big(\|g^{(1)*}\|_{L^q} +
\|d^*\|_{L^q}\Big).
\end{equation}
Let us show how the result follows from the claim before we prove
it. By \eqref{eq:hulp3} we can estimate $\|d^*\|_{L^q}$. To estimate
$\|g^{(1)*}\|_{L^q}$ we write
\[\|g^{(1)*}\|_{L^q}\leq  \|g^{(2)*}\|_{L^q} + \|g^*\|_{L^q}.\]
With the same argument as in \eqref{eq:f2estd} it follows that
\[\|g^{(2)*}\|_{L^q}\leq (2 +2 q) \|e^*\|_{L^q}\leq (4 +4 q) \|g^*\|_{L^q}.\]
Therefore, \eqref{eq:starout} gives the required estimate.

\smallskip

{\em Step 3. Proof of the claim \eqref{eq:claim}}.

For the proof of the claim we will use \cite[Lemma
7.1]{Burkholder:fu} with $\Phi(\lambda) = \lambda^q$.
To check the conditions of this lemma we will use our assumption. We
use an adaption of the argument in \cite[p.\
1000-1001]{Burkholder:Geom}.

Choose $\delta>0$, $\beta>1+\delta$ and $\lambda>0$ and define the
stopping times
\[\begin{aligned}
\mu&=\inf\{n:\|f_n^{(1)}\|>\lambda\};\\
\nu&=\inf\{n:\|f_n^{(1)}\|>\beta \lambda\};\\
\sigma&=\inf\{n: (\mathbb{E}(\|g_n^{(1)} \|^p
|\mathcal{G}))^{\inv{p}}>\delta \lambda \textrm{ or } 4
d_{n}^*>\delta\lambda \}.
\end{aligned}\]
Notice that these are all $(\calL_n)_{n\geq 1}$-stopping times. To
see this for $\sigma$, use the fact that
\[(x_m)_{m\geq 1} \mapsto \int_{[0,1]^n} \Big\|\sum_{k=1}^n h_k^{(1)}(x_1, \ldots, x_{k-1}, y_k)\Big\|^p \, dy_1, \ldots, dy_n\]
is a version for $\mathbb{E}(\|g_n^{(1)} \|^p|\mathcal{G})$ which it
is $\calL_{n-1}$-measurable, so certainly $\calL_{n}$-measurable.

Define the transforms $F$ and $G$ of $f^{(1)}$ and $g^{(1)}$ as $F_n
= \sum_{k=1}^n \one_{\{\mu<k\leq \nu\wedge \sigma\}}d_k^{(1)}$ and
$G_n = \sum_{k=1}^n \one_{\{\mu<k\leq \nu\wedge \sigma\}}e_k^{(1)}$,
for $n\geq 1$. Since $\one_{\{\mu<k\leq \nu\wedge \sigma\}}$ is
$\calL_{k-1}$-measurable it follows that $F$ and $G$ are martingales
with martingale difference sequences that are decoupled tangent
again.

Now consider $\mathbb{E}(\|G\|^p | \mathcal{G})$ on the sets
$\{\sigma\leq \mu \}$, $\{\mu<\sigma=\infty\}$ and
$\{\mu<\sigma<\infty \}$. On the first set we clearly have
$\mathbb{E}(\|G_n \|^p|\mathcal{G})=0$ for any $n\geq 1$. On the
second set we have for every $n\geq 1$
\[\begin{aligned}
(\mathbb{E}(\|G_n \|^p|\mathcal{G}))^{\inv{p}}&=(\mathbb{E}(\|
g^{(1)}_{n\land \nu} - g^{(1)}_{n\land \mu}
\|^p|\mathcal{G}))^{\inv{p}} \\ &\leq (\mathbb{E}(\| g^{(1)}_{n\land
\nu} \|^p|\mathcal{G}))^{\inv{p}} + (\mathbb{E}(\| g^{(1)}_{n\land
\mu} \|^p|\mathcal{G}))^{\inv{p}} \leq 2\delta\lambda
\end{aligned}\]
while on the set $\{\mu<\sigma<\infty \}$ we have
\[\begin{aligned}
(\mathbb{E}(\|g_n \|^p|\mathcal{G}))^{\inv{p}}&=(\mathbb{E}(\|g^{(1)}_{n\land \nu \land \sigma} - g^{(1)}_{n\land \mu} \|^p|\mathcal{G}))^{\inv{p}}  \\
& \leq (\mathbb{E}(\|e^{(1)}_{\sigma} \|^p|\mathcal{G}))^{\inv{p}} + (\mathbb{E}(\|g^{(1)}_{n\land \nu \land (\sigma-1)} \|^p|\mathcal{G}))^{\inv{p}} + (\mathbb{E}(\|g^{(1)}_{n\land \mu} \|^p|\mathcal{G}))^{\inv{p}}\\
&\leq (\mathbb{E}(\|e^{(1)}_{\sigma} \|^p|\mathcal{G}))^{\inv{p}} +
2\delta\lambda.
\end{aligned}\]
Since the difference sequences of $f^{(1)}$ and $g^{(1)}$ are
tangent and the difference sequence of $g^{(1)}$ satisfies the (CI)
condition we have
\[\begin{aligned}
\mathbb{E}\left(\|e^{(1)}_{\sigma} \|^p|\mathcal{G}\right)
&=\mathbb{E}\left(\sum_{n=1}^{\infty}\|e^{(1)}_{n}
\|^p1_{\{\sigma=n\}} |\mathcal{G}\right)
= \sum_{n=1}^{\infty}\mathbb{E}(\| e^{(1)}_{n} \|^p|\mathcal{G}) 1_{\{\sigma=n\}}\\
&= \sum_{n=1}^{\infty}\mathbb{E}(\|e^{(1)}_{n} \|^p|\mathcal{F}_{n-1}) 1_{\{\sigma=n\}} \\
&= \sum_{n=1}^{\infty}\mathbb{E}(\|d^{(1)}_{n}
\|^p|\mathcal{F}_{n-1}) 1_{\{\sigma=n\}} \leq 4^p
\sum_{n=1}^{\infty}(d_{n-1}^*)^{p} 1_{\{\sigma=n\}}\leq
(\delta\lambda)^{p}.
\end{aligned}\]
Here we used that from Davis decomposition we know that $4
d_{n-1}^*$ is an $\F_{n-1}$-measurable majorant for $\|d_n^{(1)}\|$.

On the whole we have
\[\begin{aligned}
(\mathbb{E}(\|G_n \|^p|\mathcal{G}))^{\inv{p}}\leq
3\delta\lambda1_{\{\mu<\infty
\}}=3\delta\lambda1_{\{f^{(1)*}>\lambda \}},
\end{aligned}\]
hence
\begin{equation}\label{ineqGn}
\mathbb{E}\|G \|^p\leq 3^p\delta^p\P\{f^{(1)*}>\lambda \}.
\end{equation}

Observe that on the set
\[\begin{aligned}
\{f^{(1)*}>\beta\lambda, \mathbb{E}(\|g^{(1)} \|^p | \mathcal{G})^*
\lor 4d^*
 < \delta\lambda\}
\end{aligned}\]
one has $\mu<\nu<\infty$ and $\sigma=\infty$ and therefore
\[\begin{aligned}
\| F \| = \| f_{\nu}^{(1)} -d_{\mu}^{(1)}-f_{\mu-1}^{(1)}\| \geq
\|f_{\nu}^{(1)} \| - \|d_{\mu}^{(1)} \| -\|f_{\mu-1}^{(1)} \| >
(\beta-\delta-1)\lambda.
\end{aligned}\]
Now by the assumption, applied to $F$ and $G$, and by \eqref{ineqGn}
we obtain
\[\begin{aligned}
&\mathbb{P}\{ f^{(1)*}>\beta\lambda, (\mathbb{E}(\|g^{(1)} \|^p |
\mathcal{G}))^{\inv{p}*} \lor 4d^*  < \delta\lambda\}
\leq \mathbb{P}\{\mu<\nu,\sigma=\infty\}\\
& \leq \mathbb{P}\{\|F\|> (\beta-\delta-1)\lambda  \} 
\leq C^p(\beta-\delta-1)^{-p}\lambda^{-p}\|G\|_{L^p}^{p}\\
&\leq 3^{p}C^p \delta(\beta-\delta-1)^{-p}
\mathbb{P}\{f^{(1)*}>\lambda \}.
\end{aligned}\]
Applying \cite[Lemma 7.1]{Burkholder:fu} with
$\Phi(\lambda)=\lambda^q$ gives some constant $C_q$ depending on
$C$, $p$ and $q$ such that
\[\begin{aligned}
\| f^{(1)*} \|_{L^q} &\leq C_q\big\| (\mathbb{E}(\| g^{(1)} \|^p |
\mathcal{G}))^{\inv{p}*} \lor 4d^* \big\|_{L^q} \\ & \leq
4C_q\big(\big\| (\mathbb{E}(\| g^{(1)} \|^p |
\mathcal{G}))^{\inv{p}*} \big\|_{L^q} + \| d^* \|_{L^q}\big).
\end{aligned}\]
Since $q\geq p$, \eqref{eq:claim} follows.
\end{proof}

In the above proof we have showed that Theorem \ref{thm:chardec} (2)
implies \eqref{eq:tangent2} for all $p\in [1, \infty)$ with a
constant $C_p$ with $\lim_{p\to \infty}C_p=\infty$. Using the
representation of Step 1 of the proof of Theorem \ref{thm:chardec}
one easily sees that \eqref{eq:tangent2} holds for $p=\infty$ with
constant $1$ for arbitrary Banach spaces. It is therefore natural to
consider the following problem which has been solved positively by
Hitczenko \cite{hitczenko:decoupineq} in the case that $X=\R$.

\begin{problem}
If $X$ satisfies the decoupling property, does $X$ satisfy
\eqref{eq:tangent2} with a constant $C$ independent of $p\in [1,
\infty)$?
\end{problem}

We have already observed that all UMD spaces satisfy the decoupling
inequality, thus for example the $L^p$-spaces do so for
$p\in(1,\infty)$. The next theorem states that $L^1$-spaces, which
are not UMD, satisfy the decoupling property as well.

\begin{theorem}\label{thm:main}
Let $(S, \Sigma, \mu)$ be a $\sigma$-finite measure space and let
$p\in [1, \infty)$. Let $Y$ be a UMD space and let $X=L^1(S;Y)$.
Then $X$ satisfies the decoupling property for tangent m.d.s.
\end{theorem}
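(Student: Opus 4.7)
The plan is to reduce to the $p=1$ case via Theorem \ref{thm:chardec} and Remark \ref{rem:pq}(i), and then use Fubini to transport the decoupling problem from $L^1(S;Y)$-valued martingales to $Y$-valued martingales fibrewise, where Proposition \ref{prop:Xp1} applies. Concretely, I would aim to establish statement (2) of Theorem \ref{thm:chardec}: for every martingale difference sequence $(d_n)_{n=1}^N$ in $L^1(\O;X)$ with decoupled tangent sequence $(e_n)_{n=1}^N$,
\[
\E\Big\|\sum_{n=1}^N d_n\Big\|_{L^1(S;Y)} \leq C\,\E\Big\|\sum_{n=1}^N e_n\Big\|_{L^1(S;Y)}.
\]

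First I would invoke Step~1 of the proof of Theorem \ref{thm:chardec} to replace $(d_n,e_n)$ by their Montgomery--Smith representatives $\widehat d_n(x,y)=h_n(x_1,\ldots,x_n)$ and $\widehat e_n(x,y)=h_n(x_1,\ldots,x_{n-1},y_n)$, where $h_n\in L^1([0,1]^n;L^1(S;Y))$ satisfies $\int_0^1 h_n(\cdot,x_n)\,dx_n=0$ a.e.\ in its first $n-1$ arguments. Via the Fubini isometry $L^1([0,1]^n;L^1(S;Y))\cong L^1(S;L^1([0,1]^n;Y))$ I would view each $h_n$ as a jointly measurable map $[0,1]^n\times S\to Y$. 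For $\mu$-a.e.\ $s\in S$ the slice $h_n(\,\cdot\,;s)$ lies in $L^1([0,1]^n;Y)$ and still has vanishing conditional expectation in its last coordinate, so the $Y$-valued sequences
\[
d_n^s(x,y):=h_n(x_1,\ldots,x_n;s),\qquad e_n^s(x,y):=h_n(x_1,\ldots,x_{n-1},y_n;s)
\]
form a tangent pair of martingale difference sequences (with respect to the filtration used in Step~1) and $(e_n^s)$ satisfies the (CI) condition.

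Since $Y$ is UMD, Proposition \ref{prop:Xp1} applied with $p=1$ to the pair $(d_n^s)$, $(e_n^s)$ yields a constant $C_Y$ such that $\E\|\sum_{n=1}^N d_n^s\|_Y\leq C_Y\,\E\|\sum_{n=1}^N e_n^s\|_Y$ for $\mu$-a.e.\ $s$. Integrating in $s$ and swapping integrals by Fubini on both sides gives the desired $p=1$ decoupling bound for $(\widehat d_n,\widehat e_n)$, which transfers to $(d_n,e_n)$ by equidistribution. Theorem \ref{thm:chardec} then upgrades this to \eqref{eq:tangent2} for every $p\in[1,\infty)$.

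The main technical point is the measurability/Fubini step: one must verify that the fibre sequences $(d_n^s)$ and $(e_n^s)$ really inherit the tangent and (CI) structure for $\mu$-a.e.\ $s$, and that the resulting $s$-dependent norm bound is jointly measurable so that integration over $S$ is justified. This is precisely why the explicit Montgomery--Smith representation is indispensable: it produces a jointly measurable realization of $(d_n,e_n)$ on $[0,1]^{\N}\times[0,1]^{\N}\times S$ for which pointwise-in-$s$ restriction is unambiguous, whereas without such a representation the fibre at a single $s$ of an $L^1(S;Y)$-valued random variable is only defined modulo $\mu$-null sets, and the reduction to Proposition \ref{prop:Xp1} would be ill-posed.
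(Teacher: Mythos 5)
Your proposal is correct and follows essentially the same route as the paper: reduce to $p=1$, fibre the decoupling inequality over $S$ by Fubini so that Proposition \ref{prop:Xp1} can be applied to the $Y$-valued slices, and then upgrade to all $p\in[1,\infty)$ via Theorem \ref{thm:chardec}. The only cosmetic difference is that you route the measurability issues through the Montgomery--Smith representation, whereas the paper's Lemma \ref{lem:LpX} handles them directly by choosing jointly measurable versions and verifying the martingale, tangency and (CI) properties of the slices by testing against product sets; so that representation is convenient but not, as you suggest, indispensable.
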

The proof is based on Theorem \ref{thm:chardec} and the following
lemma which readily follows from Fubini's theorem.
\begin{lemma}\label{lem:LpX}
Let $X$ be a Banach space and let $p\in [1, \infty)$. Let $(S,
\Sigma, \mu)$ be a $\sigma$-finite measure space. If $X$ satisfies
\eqref{eq:tangent2}, then $L^p(S;X)$ satisfies \eqref{eq:tangent2}.
\end{lemma}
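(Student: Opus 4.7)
The plan is to reduce the statement to a slicewise application of the hypothesis on $X$, using Fubini's theorem to switch the roles of $\O$ and $S$. The key input is the concrete representation of decoupled tangent sequences established in Step~1 of the proof of Theorem~\ref{thm:chardec}, which makes the slicing in the variable $s \in S$ completely transparent.

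More precisely, I would argue as follows. Fix $p \in [1,\infty)$, an $L^p(S;X)$-valued martingale difference sequence $(d_n)_{n\geq 1}$ and its decoupled tangent sequence $(e_n)_{n\geq 1}$. By Montgomery-Smith's representation theorem, as recalled in Step~1 of the proof of Theorem~\ref{thm:chardec}, one can assume without loss of generality that there exist functions $h_n \in L^p([0,1]^n; L^p(S;X))$ with $\int_0^1 h_n(x_1,\ldots,x_n)\,dx_n = 0$ such that
\[
d_n\bigl((x_m)_{m\geq 1},(y_m)_{m\geq 1}\bigr) = h_n(x_1,\ldots,x_n), \quad e_n\bigl((x_m)_{m\geq 1},(y_m)_{m\geq 1}\bigr) = h_n(x_1,\ldots,x_{n-1},y_n).
\]
Using the canonical identification $L^p([0,1]^n; L^p(S;X)) = L^p(S; L^p([0,1]^n; X))$, for $\mu$-a.e.\ $s \in S$ the slice $h_n^s(x_1,\ldots,x_n) := h_n(x_1,\ldots,x_n)(s)$ lies in $L^p([0,1]^n; X)$ and satisfies $\int_0^1 h_n^s(x_1,\ldots,x_n)\,dx_n = 0$ for a.e.\ $(x_1,\ldots,x_{n-1})$.

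Defining $d_n^s$ and $e_n^s$ from $h_n^s$ by the same formulas as above, the exact argument of Step~1 of Theorem~\ref{thm:chardec} shows that for $\mu$-a.e.\ $s \in S$, the sequence $(d_n^s)_{n\geq 1}$ is an $X$-valued martingale difference sequence with respect to the filtration $(\widehat{\F}_n)$ and $(e_n^s)_{n\geq 1}$ is its decoupled tangent sequence (with the same conditioning $\sigma$-field $\widehat{\G}$). Applying the hypothesis \eqref{eq:tangent2} on $X$ for this fixed $s$ yields a constant $C_p$, independent of $s$, such that
\[
\E\Big\|\sum_{n=1}^N d_n^s\Big\|_X^p \leq C_p^p\, \E\Big\|\sum_{n=1}^N e_n^s\Big\|_X^p, \quad N \geq 1.
\]

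Finally, I would integrate over $s$ and swap the order of integration by Tonelli's theorem. Since
\[
\E \Big\|\sum_{n=1}^N d_n\Big\|_{L^p(S;X)}^p = \E \int_S \Big\|\sum_{n=1}^N d_n^s\Big\|_X^p \,d\mu(s) = \int_S \E \Big\|\sum_{n=1}^N d_n^s\Big\|_X^p\,d\mu(s),
\]
and analogously for $(e_n)$, the slicewise bound integrates to the desired inequality $\|\sum_{n=1}^N d_n\|_{L^p(\O;L^p(S;X))} \leq C_p \|\sum_{n=1}^N e_n\|_{L^p(\O;L^p(S;X))}$. The only step that requires a moment's care — and which I view as the main (modest) obstacle — is justifying that the slicing $h_n \mapsto h_n^s$ produces, for a.e.\ $s$, genuine tangent martingale differences satisfying (CI); this is however immediate from the explicit product structure of the representation, since the martingale, tangent, and (CI) properties all reduce to pointwise-in-$s$ statements about the scalar-valued functions $h_n^s$.
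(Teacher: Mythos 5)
Your proof is correct and reaches the desired inequality by the same basic mechanism as the paper (slicewise application of the hypothesis on $X$ plus Fubini--Tonelli), but the one step that requires care is handled by a genuinely different device. The paper does not invoke the Montgomery-Smith representation in this lemma: it takes the given sequences $(d_n)_{n\geq 1}$ and $(e_n)_{n\geq 1}$ in $L^p(\O;L^p(S;X))$, produces jointly measurable versions $\tilde d_n(\omega,s)$, $\tilde e_n(\omega,s)$ by Fubini, and then verifies directly that for $\mu$-almost every $s$ the slices form a martingale difference sequence, are tangent, and satisfy (CI), by testing the defining identities against sets of the form $B\times A$ with $B\in\F_{n-1}$, $A\in\Sigma$, and Borel sets $C\subset X$. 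You instead pass first to the concrete representation of Step~1 of the proof of Theorem~\ref{thm:chardec} and read the slicewise properties off the explicit product structure of the functions $h_n$, using the identification $L^p([0,1]^n;L^p(S;X))\cong L^p(S;L^p([0,1]^n;X))$ (legitimate since $\mu$ is $\sigma$-finite, and permissible since \eqref{eq:tangent2} depends only on the joint law of the pair). Both routes are valid: yours makes the claim that almost every slice is a decoupled tangent pair essentially automatic, at the cost of importing the representation theorem; the paper's argument is self-contained and more elementary, relying only on Fubini, but must carry out the generating-set verifications explicitly. The concluding step, applying \eqref{eq:tangent2} on $X$ for each fixed $s$ with a constant independent of $s$ and integrating over $S$, is identical in the two arguments.
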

\begin{proof}
Let $(d_n)_{n\geq 1}$ and $(e_n)_{n\geq 1}$ be decoupled tangent
sequences in $L^p(\O;L^p(S;X))$. By Fubini's theorem there exists a
sequence $(\tilde{d}_n)_{n\geq 1}$ of functions from $\O\times S$ to
$X$ such that for almost all $\omega\in \O$, for almost all $s\in
S$, for all $n\geq 1$ we have
\[d_n(\omega)(s)  = \tilde{d}_n(\omega,s)\]
and for almost all $s\in S$, $\tilde{d}_n(s)_{n\geq 1}$ is
$\mathcal{F}_{n}$-measurable. We claim that for almost all $s\in S$,
\[\E(\tilde{d}_n(\cdot,s)|\mathcal{F}_{n-1}) =0 \ \text{a.s.}\]
To prove this it suffices to note that for all $A\in \Sigma$ and
$B\in \F_{n-1}$,
\[\int_{A} \int_B \tilde{d}_n(\omega,s) \, dP(\omega) \, d\mu(s) = \int_{A} \int_B d_n(\omega)(s) \, dP(\omega) \, d\mu(s)=0.\]
Also such $(\tilde{e}_n)_{n\geq 1}$ exists for $(e_n)_{n\geq 1}$.
Next we claim that for almost all $s\in S$, $(\tilde d_n(\cdot,
s))_{n\geq 1}$ and $(\tilde e_n(\cdot, s))_{n\geq 1}$ are tangent
and $(\tilde e_n(\cdot, s))_{n\geq 1}$ satisfies condition (CI).
Indeed, for $A$ and $B$ as before and for a Borel set $C\subset X$
we have
\[\begin{aligned}
\int_{A} \int_B \one_{\{\tilde{d}_n(\omega,s)\in C\}} \, dP(\omega)
\, d\mu(s) &= \int_{A} \int_B \one_{\{d_n(\omega)(s)\in C\}} \,
dP(\omega) \, d\mu(s)
\\ &= \int_{A} \int_B \one_{\{e_n(\omega)(s)\in C\}} \, dP(\omega) \, d\mu(s)
\\ &= \int_{A} \int_B \one_{\{\tilde{e}_n(\omega,s)\in C\}} \, dP(\omega) \,
d\mu(s).
\end{aligned}\]
This clearly suffices. Similarly, one can prove the (CI) condition.

Now by Fubini's theorem and the assumption applied for almost all
$s\in S$ we obtain that
\[
\begin{aligned}
\E\Big\|\sum_{n=1}^N d_n \Big\|^p_{L^p(S;X)}& = \int_S
\int_{\O}\Big\|\sum_{n=1}^N \tilde{d}_n(\omega, s) \Big\|^p \,
d\P(\omega) \, d\mu(s) \\ & \leq C^p \int_S
\int_{\O}\Big\|\sum_{n=1}^N \tilde{e}_n(\omega, s) \Big\|^p \,
d\P(\omega) \, d\mu(s) & = \E\Big\|\sum_{n=1}^N e_n
\Big\|^p_{L^p(S;X)}.
\end{aligned}\]
\end{proof}

\begin{proof}[Proof of Theorem \ref{thm:main}]
By Proposition \ref{prop:Xp1} the space $Y$ satisfies the decoupling
property. Therefore, we obtain from Lemma \ref{lem:LpX} that $X =
L^1(S;Y)$ satisfies \eqref{eq:tangent2} for $p=1$. Now Theorem
\ref{thm:chardec} implies that $X$ satisfies the decoupling
property.
\end{proof}

For $p\in [1, \infty)$ let $\mathcal{S}_p$ be the Schatten class of
operators on a infinite dimensional Hilbert space. For every $p\in
(1, \infty)$, $\mathcal{S}_p$ is a UMD space. Therefore, by
Proposition \ref{prop:Xp1} it satisfies the decoupling property.
Since $\mathcal{S}_1$ is the non-commutative analogue of $L^1$, it
seems reasonable to state the following problem.

\begin{problem}
Does the Schatten class $\mathcal{S}_1$ satisfy the decoupling
property \eqref{eq:tangent2}?
\end{problem}


{\em Acknowledgment} -- The authors thank Jan van Neerven for
helpful comments.

\providecommand{\bysame}{\leavevmode\hbox
to3em{\hrulefill}\thinspace}

\end{document}